\newtheorem{theorem}{Theorem}
\newtheorem{defn}{Definition}
\newtheorem{proposition}{Proposition}
\newtheorem{corollary}{Corollary}
\newtheorem{example}{Example}
\newtheorem{remark}{Remark}
\newcommand{\al}{\alpha}
\newcommand{\NN}{\mathbb{N}} 
\newcommand{\id}{\operatorname{id}} 
\newcommand{\kk}{\mathbf{k}} 
\newcommand{\VV}{\mathbf{V}} 
\newcommand{\VVq}{\mathbf{V}^{(q)}} 
\newcommand{\BB}{\mathbf{B}} 
\newcommand{\DD}{\mathbf{D}} 
\newcommand{\comp}{\operatorname{comp}}
\newcommand{\Des}{\operatorname{Des}}
\newcommand{\col}{\operatorname{col}}
\newcommand{\Comp}{\operatorname{Comp}}
\newcommand{\Set}{\operatorname{Set}}
\providecommand*{\shuffle}{%
  \mathbin{\mathpalette\shuffle@{}}%
}
\newcommand*{\shuffle@}[2]{%
  \sbox0{$#1\vcenter{}$}%
  \kern .15\ht0 
  \rlap{\vrule height .25\ht0 depth 0pt width 2.5\ht0}%
  \raise.1\ht0\hbox to 2.5\ht0{%
    \vrule height 1.75\ht0 depth -.1\ht0 width .17\ht0 %
    \hfill
    \vrule height 1.75\ht0 depth -.1\ht0 width .17\ht0 %
    \hfill
    \vrule height 1.75\ht0 depth -.1\ht0 width .17\ht0 %
  }%
  \kern .15\ht0 
}
\newenvironment{verlong}{}{}
\newenvironment{vershort}{}{}
\title{A permutation based approach to the $q$-deformation of the Dynkin Operator}
\author[D. Grinberg \and E.A. Vassilieva]{Darij Grinberg\thanks{\href{mailto:darijgrinberg@gmail.com}{darijgrinberg@gmail.com}}\addressmark{1}, \and Ekaterina A. Vassilieva\thanks{\href{mailto:katya@lix.polytechnique.fr}{katya@lix.polytechnique.fr}}\addressmark{2}}
\address{\addressmark{1}Department of Mathematics, Drexel University, Philadelphia, PA 19104, USA \\ \addressmark{2}LIX, Ecole Polytechnique, Palaiseau, France}
\abstract{Introduced by Solomon, the descent algebra is a significant subalgebra of the group algebra of the symmetric group $\mathbf{k}S_n$ related to many important algebraic and combinatorial topics. It contains all the classical Lie idempotents of $\mathbf{k}S_n$, in particular the Dynkin operator, a fundamental tool for studying the free Lie algebra. We look at a $q$-deformation of the Dynkin operator and study its action over the descent algebra with classical combinatorial tools like Solomon's Mackey formula. This leads to elementary proofs that the operator is indeed an idempotent for $q=1$ as well as interesting formulas and algebraic structures especially when $q$ is a root of unity.}
\keywords{Dynkin idempotent, descent algebra, Solomon's Mackey formula}
\begin{document}

\maketitle

\section{Introduction}
Given a base ring $\mathbf{k}$ and a non-negative integer $n \in \mathbb{N}$, denote $[n]=\{1, 2, \dots, n\}$,  $S_n$ the symmetric group on $[n]$ and let $\mathbf{k}S_n$ be the group algebra of the symmetric group.
The \emph{descent algebra} $\Sigma_n$, introduced by Solomon in \cite{Sol76}, is the subalgebra spanned by the elements of  $\mathbf{k}S_n$ such that permutations with the same descent set have the same coefficients.
This algebra $\Sigma_n$ has two elementary bases indexed by subsets of $[n-1]$ or, equivalently, by integer compositions of $n$: the descent class $\mathbf{D}$-basis and the subset $\mathbf{B}$-basis.
The structure constants of the $\mathbf{B}$-basis admit an elegant combinatorial interpretation 
known as Solomon's Mackey formula.
The Dynkin operator $\mathbf{V}_n$ is a remarkable element of $\Sigma_n$ defined as an alternating sum of some descent classes (i.e. elements of the $\mathbf{D}$-basis) playing a fundamental rôle in the study of the free Lie algebra \cite[(8.4.2)]{Reutenauer1993}.
The result that $\mathbf{V}_n$ is indeed (quasi)idempotent follows easily from the fact that $\mathbf{V}_n \mathbf{B}_I = 0$ for all non-empty subsets $I \subseteq [n-1]$.
The latter fact appears in the literature with several proofs, but we provide the first combinatorial proof that derives it from Solomon's Mackey formula.
We also study the $q$-deformation of $\mathbf{V}_n$ introduced in \cite{CalderbankHanlonSundaram94} and compute its left action on the $\mathbf{B}$-basis
as well as the dimension of the right ideal it generates depending on $q$.
\subsection{Permutations, integer compositions and subsets}
A \emph{composition} $\al = (\al_1, \al_2, \dots, \al_p)$ of a non-negative integer $n$, denoted $\al \vDash n$, is a finite sequence of $\ell(\al) := p$ positive integers such that $|\al| := \sum_i \al_i = n$. Denote $\Comp_n$ the set of compositions of $n$. There is a natural bijection between compositions of $n$ and subsets of $[n-1]$.
Namely, for a composition $\al = (\al_1, \al_2, \dots, \al_p) \in \Comp_n$, let $\Set(\al)$ be the subset of $[n-1]$ defined as $\Set(\al) = \{\al_1,\al_1+\al_2,\dots,\al_1+\al_2+\dots+\al_{p-1}\}$. Conversely, given a subset $I = \{i_1< i_2< \dots < i_p\} \subseteq [n-1]$ let $\comp(I)$ be the composition $(i_1, i_2-i_1,i_3 - i_2, \dots, n-i_p)$ of $n$.
Thus, $|\Comp_n| = 2^{n-1}$ when $n \geq 1$.
\begin{defn}[composition refinement]
Let $\alpha = (\alpha_1, \alpha_2, \dots, \alpha_m)$ and $\beta = (\beta_1, \beta_2, \dots, \beta_p)$ be two compositions of the same integer $n$. We say that $\alpha$ \emph{refines} $\beta$, and write $\alpha \preceq \beta$, if and only if $\alpha$ can be split into $p$ subsequences $\alpha^i = (\alpha^i_1, \alpha^i_2, \dots, \alpha^i_{m_i})$ for $1\leq i\leq p$ such that $\alpha^i \vDash \beta_i$.
Note that the sequence of compositions $(\alpha^i)_{i=1\dots p}$ is uniquely determined by $\alpha$ and $\beta$.
Denote by $\alpha|\beta$ this sequence, and by $\overline{\alpha|\beta}$ the list of the last (rightmost) elements of each $\alpha^i$, that is,
\[
\overline{\alpha|\beta} = (\alpha^1_{m_1}, \alpha^2_{m_2}, \dots, \alpha^p_{m_p}).
\]
\end{defn}
\noindent If $I$ and $J$ are two subsets of $[n-1]$, then $J \subseteq I$ if and only if $\comp(I) \preceq \comp(J)$. We write $I|J$ (resp. $\overline{ I|J}$) instead of $\comp(I)|{\comp(J)}$ (resp. $\overline{\comp(I)|{\comp(J)}}$) for brevity's sake. If $J \nsubseteq I$, we set $\overline{ I|J} = \emptyset$. Finally, if $U$ is a sequence of numbers, we let $|U|$ (resp. $\Sigma U$) denote the number (resp. the sum) of its elements, i.e., we set $\Sigma U := \sum\limits_{u \in U} u$.

\subsection{The descent algebra of the symmetric group}
Given a permutation $\pi \in S_n$, we consider its \emph{descent set} $\Des(\pi)$ defined as $\Des(\pi) = \{1\leq i\leq n-1| \pi(i)>\pi(i+1)\}.$ 
For $I \subseteq [n-1]$, let $\mathbf{D}_I$ and $\mathbf{B}_I$ be the elements of the group algebra $\mathbf{k}S_n$ defined as \begin{equation*}
\mathbf{D}_I = \sum_{\pi \in S_n; \; \Des(\pi) = I}\pi, \;\;\;\;\;\;\;\; \mathbf{B}_I = \sum_{\pi \in S_n; \; \Des(\pi) \subseteq I}\pi.
\end{equation*} 
As shown by Solomon in \cite{Sol76} in the more general context of finite Coxeter groups, the $\mathbf{D}_I$'s (resp. $\mathbf{B}_I$'s) span the \emph{descent algebra} $\Sigma_n$, a subalgebra of $\mathbf{k}S_n$ of dimension $2^{n-1}$. The families $(\mathbf{D}_I)_{I\subseteq [n-1]}$ and $(\mathbf{B}_I)_{I\subseteq [n-1]}$ are bases of $\Sigma_n$, known as the \emph{descent class} or simply $\mathbf{D}$-basis, resp. the \emph{subset} or $\mathbf{B}$-basis.
The two bases are obviously related through
\begin{equation}
\label{equation.bidj}
\mathbf{B}_I = \sum_{J \subseteq I} \mathbf{D}_J, \;\;\;\;\;\;\;\; \mathbf{D}_I = \sum_{J \subseteq I}(-1)^{|I\setminus J|}\mathbf{B}_J.
\end{equation} 
The structure constants of the $\mathbf{B}$-basis have a combinatorial description 
known as \emph{Solomon's Mackey formula} \cite[Theorem 2]{Willigenburg98}, \cite[Prop. 4.3]{BleLau92} (note the different conventions for the order of multiplication in $S_n$).
To state it, for any $f \in \Sigma_n$ and $K \subseteq [n-1]$, let $[\mathbf{B}_K]f$ denote the coefficient of $\mathbf{B}_K$ in the expansion of $f$ in the $\mathbf{B}$-basis of $\Sigma_n$.
We abbreviate $[\mathbf{B}_K](fg)$ as $[\mathbf{B}_K]fg$.
Furthermore, if $M$ is a $p\times q$ matrix, its \emph{row sums vector} is the $p$-tuple of row sums, its \emph{column sums vector} is the $q$-tuple of column sums, and its \emph{reduced reading word} is the concatenation of the rows of $M$ without zeroes.

\begin{proposition}[Solomon's Mackey formula]
\label{prop.solmac}
Let $I, J, K \subseteq [n-1]$.
Let $\mathbb{N}^{I, J}_K$ be the set of nonnegative integer matrices with row sums vector $\comp(I)$, column sums vector $\comp(J)$ and reduced reading word $\comp(K)$.
Then, $[\mathbf{B}_K]\mathbf{B}_J\mathbf{B}_I = |\mathbb{N}^{I, J}_K|$.
\end{proposition}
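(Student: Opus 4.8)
The plan is to prove a single refined identity that packages all target compositions $K$ simultaneously, namely the product formula
\[
\mathbf{B}_J \mathbf{B}_I \;=\; \sum_{M} \mathbf{B}_{\Set(r(M))},
\]
where $M$ ranges over \emph{all} nonnegative integer matrices with row sums vector $\comp(I)$ and column sums vector $\comp(J)$, and $r(M)$ denotes the composition of $n$ given by the reduced reading word of $M$. Once this is in hand, extracting a coefficient is immediate: since $(\mathbf{B}_K)_{K\subseteq[n-1]}$ is a basis of $\Sigma_n$ and the map $\gamma \mapsto \Set(\gamma)$ is a bijection from compositions of $n$ to subsets of $[n-1]$, each matrix $M$ contributes to exactly one basis element, so $[\mathbf{B}_K]\mathbf{B}_J\mathbf{B}_I$ equals the number of $M$ with $r(M)=\comp(K)$, which is $|\mathbb{N}^{I,J}_K|$. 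Thus the reduction is the easy part and the genuine content is the product formula.

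To prove the product formula I would compare the coefficient of an arbitrary permutation $\pi \in S_n$ on both sides. Write $\alpha = \comp(I) = (\alpha_1,\dots,\alpha_p)$ and $\beta = \comp(J) = (\beta_1,\dots,\beta_q)$. A permutation $\sigma$ with $\Des(\sigma) \subseteq I$ is precisely one whose one-line word splits into $p$ increasing runs of lengths $\alpha_1,\dots,\alpha_p$; recording the set of values in each run identifies such $\sigma$ bijectively with ordered set partitions $(A_1,\dots,A_p)$ of $[n]$ with $|A_r| = \alpha_r$, and likewise each $\tau$ with $\Des(\tau)\subseteq J$ corresponds to an ordered set partition $(C_1,\dots,C_q)$ with $|C_s| = \beta_s$. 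Expanding, the coefficient of $\pi$ in $\mathbf{B}_J\mathbf{B}_I$ is the number of factorizations $\pi = \tau\sigma$ (in the paper's multiplication convention) with $\Des(\sigma)\subseteq I$ and $\Des(\tau)\subseteq J$.

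The bijective heart is to match such factorizations with the matrices above. To a factorization I associate the $p\times q$ matrix $M=(m_{rs})$ whose entry $m_{rs}$ counts the elements common to the $r$-th block of the $\sigma$-partition and the $s$-th block of the $\tau$-partition, read off along $\pi$; by construction the row sums of $M$ are $\alpha$ and the column sums are $\beta$. I would then establish two facts: first, that the common refinement of the two partitions forces the inclusion $\Des(\pi) \subseteq \Set(r(M))$; and second, that $\pi$ together with $M$ reconstructs the pair $(\sigma,\tau)$ uniquely. It follows that, for fixed $\pi$, the factorizations are in bijection with the matrices $M$ (row sums $\alpha$, column sums $\beta$) satisfying $\Des(\pi)\subseteq\Set(r(M))$. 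This is exactly the coefficient of $\pi$ on the right-hand side of the product formula, because $\pi$ occurs in $\mathbf{B}_{\Set(r(M))}$ (with coefficient $1$) precisely when $\Des(\pi)\subseteq\Set(r(M))$, and this proves both the formula and the proposition.

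The main obstacle is the pair of verifications in the previous step: showing that the reduced reading word of $M$ controls the descent set of $\pi$ in exactly the right way, and checking that the reconstruction map is well defined and inverse to the matrix construction. A clean way to organise this is to recognise the matrices with row sums $\alpha$ and column sums $\beta$ as indexing the double cosets $S_\beta \backslash S_n / S_\alpha$ of the Young subgroups $S_\alpha, S_\beta$ — the classical bijection between contingency tables and double cosets, which is Solomon's original Mackey-theoretic viewpoint — and this provides an independent check on the count. The bookkeeping around the multiplication convention (the paper warns that references disagree on the order of multiplication in $S_n$) is where I would be most careful, since it is precisely what decides whether the rows carry $\comp(I)$ and the columns $\comp(J)$ rather than the reverse.
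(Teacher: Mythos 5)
The paper does not actually prove this proposition; it quotes it from the literature (van Willigenburg, Blessenohl--Laue), so there is no in-paper argument to compare against. Your proposal is, in substance, the standard proof from those sources: identify the permutations $\sigma$ with $\Des(\sigma)\subseteq I$ with ordered set partitions of $[n]$ of type $\comp(I)$ (equivalently, minimal-length coset representatives of the Young subgroup $S_{\comp(I)}$), count the factorizations $\pi=\tau\sigma$ contributing to the coefficient of $\pi$ by the contingency table recording how the two block structures interleave, and note that this table has the prescribed row and column sums while its reduced reading word controls $\Des(\pi)$. The overall structure is correct, the reduction from the refined product formula to the coefficient statement is sound, and the two facts you isolate as the crux are both true and provable along the lines you indicate. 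The one place you should tighten is the definition of the matrix: $m_{rs}$ is \emph{not} the cardinality of the intersection of the $r$-th value-block of $\sigma$ with the $s$-th value-block of $\tau$ (these are two subsets of $[n]$ living on opposite sides of the factorization, and their literal intersection gives the wrong matrix in general). Under the convention $(\tau\sigma)(j)=\tau(\sigma(j))$, the correct entry is $m_{rs}=\bigl|\{\,j \text{ in the } r\text{-th position-block of } \comp(I) : \sigma(j) \text{ lies in the } s\text{-th position-block of } \comp(J)\,\}\bigr|$, i.e.\ the intersection of the value-block $A_r=\sigma(\text{block}_r)$ with the \emph{position}-block of $\tau$. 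With this definition the row and column sums are immediate; since $\sigma$ is increasing on each position-block of $\comp(I)$, the preimages of the position-blocks of $\comp(J)$ cut it into consecutive sub-intervals of lengths $m_{r1},\dots,m_{rq}$, and since $\tau$ is increasing on its position-blocks, $\pi$ is increasing on each sub-interval -- which is exactly the containment $\Des(\pi)\subseteq\Set(r(M))$ and also explains why the reading word goes by rows (the opposite multiplication convention transposes the picture). Your phrase ``read off along $\pi$'' suggests you intend this, but as written the construction is ambiguous, and it is precisely the spot where the convention issue you flag would bite.
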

\noindent Note that $\mathbb{N}^{I, J}_K$ is empty unless $I \subseteq K$. Thus, $[\mathbf{B}_K]\mathbf{B}_J\mathbf{B}_I = 0$ unless $I \subseteq K$. 
The non-zero entries of the $i$-th row in the matrix must give the $i$-th subsequence in $K|I$. Furthermore the sequence of the rightmost non-zero elements of each row in the matrix is exactly $\overline{ K|I}$.
\begin{example}
Assume $n=5$, $I=\{3\}$, $J=K=\{1,3\}$. We have $\comp(I)=(3,2)$, $\comp(J) = \comp(K) = (1,2,2)$. We notice that $I\subseteq K$, thus $\comp(K) \preceq \comp(I)$. Finally $K|I=\left((1,2),(2)\right)$,  $\overline{ K|I} = (2,2)$ and $|\mathbb{N}^{I, J}_K| = 2$ as there are exactly $2$ suitable matrices, namely:
\[
\begin{pmatrix}
1&0  &2  \\
0&2  &0
\end{pmatrix}  \text{ and }  
\begin{pmatrix}
1&2  &0 \\
0&0  &2
\end{pmatrix} .
\]

\end{example}

\subsection{The Dynkin operator}
A permutation $w\in S_{n}$ is said to be a
\emph{V-permutation} if there exists some $k\in\left[  n\right]  $ such that%
\[
w\left(  1\right)  >w\left(  2\right)  >\cdots>w\left(  k\right) <w\left(
k+1\right)  <w\left(  k+2\right)  <\cdots<w\left(  n\right).
\]
In this case, automatically $w\left(k\right) = 1$.
\begin{defn}[Dynkin operator]
The \emph{Dynkin operator} is the element
\begin{align*}
\mathbf{V}_{n}  &  =\sum_{\substack{w\in S_{n}\text{ is a}%
\\\text{V-permutation}}}
\left(  -1\right)  ^{w^{-1}\left(  1\right)  -1}w
\in \kk S_n.
\end{align*}
\end{defn}

\noindent
The Dynkin operator can also be written as
\begin{align}
\label{eq.Vn=cycs}
\mathbf{V}_{n}
&  =\left(  1-\operatorname*{cyc}\nolimits_{2,1}\right)  \left(
1-\operatorname*{cyc}\nolimits_{3,2,1}\right)  \left(  1-\operatorname*{cyc}%
\nolimits_{4,3,2,1}\right)  \cdots\left(  1-\operatorname*{cyc}%
\nolimits_{n,n-1,\ldots,1}\right),
\end{align}
where $\operatorname*{cyc}\nolimits_{i,i-1,\dots,2,1}$ is the cycle permutation that maps $i,i-1,\ldots,2,1$ to $i-1,i-2,\ldots,1,i$, respectively, and fixes all the other elements of $[n]$.
See \cite[Lemma 1.1]{Garsia1990} for a proof. The Dynkin operator satisfies
\begin{equation}
\label{equation.vn2vn}
\mathbf{V}_{n}^{2}=n\mathbf{V}_{n}.
 \end{equation}
This is proved in \cite[\S 1]{BergeronBergeronGarsia} and \cite[Remark 2.1]{Garsia1990} using the free Lie algebra, in \cite[proof of (5)]{BleLau92} using custom-built combinatorics, and in \cite[Subsection 4.5.2]{sga} using Hopf algebras.
The former two proofs rely on the fact that
\begin{equation}
\label{equation.vndi}
\mathbf{V}_{n}\mathbf{D}_{I}=\left(  -1\right)  ^{\left\vert I\right\vert}\mathbf{V}_{n}
\qquad \text{ for any } I \subseteq \left[n-1\right].
\end{equation}
Using Equation (\ref{equation.bidj}), this can be rewritten as
\begin{equation}
\label{equation.vnbi}
\mathbf{V}_{n}\mathbf{B}_{I}=0
\qquad \text{ for any nonempty } I \subseteq \left[n-1\right].
\end{equation}
This surprising connection with the $\mathbf{B}$-basis
suggests a link to Solomon's Mackey formula.
Furthermore we felt that it is worth studying how the action of  $\mathbf{V}_{n}$ on the $\mathbf{B}$-basis changes (e.g. in terms of rank and kernel) when the coefficient $-1$ in $(-1)^{w^{-1}\left(  1\right)}$ is replaced by an arbitrary element $-q$ of the base ring $\mathbf{k}$, especially a root of unity. We thus look at the following $q$-deformation of the Dynkin operator. 
\begin{defn}[$q$-deformation of the Dynkin operator, \cite{CalderbankHanlonSundaram94}]
Fix $q$ in the base ring $\mathbf{k}$ and a non-negative integer $n$. The $q$-\emph{deformation} of the Dynkin operator is defined as
\begin{align}
\label{equation.vnq}
\mathbf{V}_{n}^{(q)}  &  :=\sum_{\substack{w\in S_{n}\text{ is
a}\\\text{V-permutation}}}(-q)^{w^{-1}\left(  1\right)  -1}w \in \kk S_n.
\end{align}
\end{defn}
Generalizing \eqref{eq.Vn=cycs}, $\mathbf{V}_{n}^{(q)} =\left(  1-q\operatorname*{cyc}\nolimits_{2,1}\right)  \left(1-q\operatorname*{cyc}\nolimits_{3,2,1}\right)  \cdots\left(  1-q\operatorname*{cyc}\nolimits_{n,n-1,\ldots,1}\right).$

In the work of Calderbank, Hanlon and Sundaram \cite{CalderbankHanlonSundaram94}, $\VV_n^{(q)}$ appears  under the name $\eta_n(\alpha)$ for $\alpha = q$ in the context of higher Lie modules: The left ideal of $\kk S_n$ generated by $\VVq_n$ is an $S_n$-representation $V_n(\alpha)$, which is (isomorphic to) the multilinear component of the ``$\alpha$-deformed free Lie algebra'' on $n$ generators (which generalises both the free Lie algebra and the free Lie superalgebra). Our focus is to study $\VVq_n$ using Solomon's Mackey formula and elementary permutation combinatorics. 
\noindent Three specializations of $\VV^{(q)}_n$ are worthy of note. As per the definition $\VV^{(1)}_n = \VV_n$ is the standard Dynkin operator.
Next, $\VV^{(0)}_n = \id \in \kk S_n$. Finally, $\VV^{(-1)}_n$ is the sum of all V-permutations, i.e. the sum of all permutations with empty \emph{peak set}. $\VV^{(-1)}_n$ is the main building block of the peak algebra analog to the Dynkin operator introduced in \cite{Sch05}.

In the next sections, we proceed with a general expansion of the action of $\mathbf{V}_{n}^{(q)}$ on the $\mathbf{B}$-basis using Solomon's Mackey formula.
Then we provide some consequences:
new proofs of \eqref{equation.vnbi}, \eqref{equation.vndi} and \eqref{equation.vn2vn};
a description of the eigenvalues of the action of $\VVq_n$ on $\Sigma_n$;
and a combinatorial formula for the dimension of its image $\VVq_n \Sigma_n$.

\section{Action of $\mathbf{V}_n^{(q)}$ on the $\mathbf{B}$-basis of the descent algebra}

Given an integer $n \in \NN$, a parameter $q \in \kk$ and a subset $I \subseteq [n-1]$, we look at the expansion of $\mathbf{V}_n^{(q)} \mathbf{B}_I$ in the $\mathbf{B}$-basis of the descent algebra $\Sigma_n$. We first decompose $\mathbf{V}_n^{(q)}$  in the $\mathbf{D}$-basis and solve the problem for these basis elements.
\subsection{Decomposition according to descent classes}
%
First, we decompose the $q$-deformed Dynkin operator in the $\mathbf{D}$-basis of $\Sigma_n$.
\begin{proposition}
\label{proposition.vnqd}
For any $n \in \NN$ and $q \in \kk$, the $q$-deformed Dynkin operator $\mathbf{V}_{n}^{(q)}$ expands as
\begin{equation}
\label{equation.vnqd}
\mathbf{V}_{n}^{(q)} = \sum_{k=1}^n (-q)^{k-1} \mathbf{D}_{[k-1]}.
\end{equation}
As a result, $\mathbf{V}_{n}^{(q)} \in \Sigma_n$.
\end{proposition}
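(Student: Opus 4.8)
The plan is to group the V-permutations according to the position of their valley and to recognize each such group as a single descent class, so that the sum defining $\mathbf{V}_n^{(q)}$ collapses termwise into the claimed $\mathbf{D}$-basis expansion. The whole argument is really a reindexing of the defining sum, driven by one elementary observation about descent sets of V-permutations.

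The key step is to prove that, for each $k \in [n]$, the V-permutations $w \in S_n$ with $w^{-1}(1) = k$ are \emph{exactly} the permutations $\pi \in S_n$ with $\Des(\pi) = [k-1]$. In one direction, I would start from a V-permutation $w$ with valley at position $k$. As the excerpt already records, the defining chain $w(1) > \cdots > w(k) < w(k+1) < \cdots < w(n)$ forces $w(k)$ to be the global minimum of the values, hence $w(k) = 1$ and $k = w^{-1}(1)$; so the exponent $w^{-1}(1) - 1$ equals $k-1$. Reading descents off the same chain, positions $1, \dots, k-1$ are descents (strict decreases) while positions $k, \dots, n-1$ are ascents (strict increases), so $\Des(w) = \{1, 2, \dots, k-1\} = [k-1]$. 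Conversely, any $\pi$ with $\Des(\pi) = [k-1]$ satisfies precisely this chain of inequalities, making it a V-permutation with $\pi^{-1}(1) = k$. Thus the two descriptions coincide, and the V-permutations with $w^{-1}(1) = k$ are exactly the terms summed in $\mathbf{D}_{[k-1]}$.

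With this identification in hand, I would partition the sum defining $\mathbf{V}_n^{(q)}$ over the value $k = w^{-1}(1) \in \{1, \dots, n\}$, pull the now-constant sign $(-q)^{k-1}$ out of each block, and recognize each inner sum as a descent class:
\[
\mathbf{V}_n^{(q)} = \sum_{k=1}^n \;\sum_{\substack{w \in S_n \\ \Des(w) = [k-1]}} (-q)^{k-1}\, w = \sum_{k=1}^n (-q)^{k-1}\, \mathbf{D}_{[k-1]}.
\]
Since the right-hand side is a $\kk$-linear combination of $\mathbf{D}$-basis elements of $\Sigma_n$, it lies in $\Sigma_n$, giving the final assertion. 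I do not anticipate any genuine obstacle: the only point needing care is the bijective matching of the valley position $k$ with the descent-set parameter $[k-1]$, and this is immediate from the defining inequality; everything else is bookkeeping of the sign and the reindexing of the sum.
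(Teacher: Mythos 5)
Your proposal is correct and follows essentially the same route as the paper's own proof: identifying the V-permutations with valley at position $k$ as exactly the permutations with descent set $[k-1]$, and then regrouping the defining sum of $\mathbf{V}_n^{(q)}$ by the value of $w^{-1}(1)$. You simply spell out both directions of that identification in more detail than the paper does.
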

\begin{proof}
For each $k \in [n]$, we know that $\mathbf{D}_{\left[  k-1\right]  }$ is the sum of all permutations $w \in S_n$ with descent set $[k-1]$. But these are precisely the V-permutations $w \in S_n$ with $w^{-1}\left(1\right) = k$.
Hence, the right hand side of \eqref{equation.vnqd} agrees with the right hand side of \eqref{equation.vnq}.
The equality \eqref{equation.vnqd} follows, and thus the proposition holds.
\end{proof}
%
%

\noindent Define $\mathbf{V}_{n,k}:=\mathbf{D}_{\left[  k-1\right]  }$ for each $k \in \left[n\right]$. We expand $\mathbf{V}_{n,k}\mathbf{B}_I$ for $I\subseteq [n-1]$ in the  $\mathbf{B}$-basis.
\begin{theorem}
\label{thm.vnk}
Given two positive integers $k\leq n$ and two sets $I,K \subseteq [n-1]$ with $I \subseteq K$.
Then, the $\mathbf{B}_K$-coefficient in the expansion in the $\mathbf{B}$-basis of $\mathbf{V}_{n,k}\mathbf{B}_I$ is 
\begin{equation}
[\mathbf{B}_K]\mathbf{V}_{n,k}\mathbf{B}_I = (-1)^{k + |K|}\sum_{\substack{U \subseteq \overline{ K|I};\\ \Sigma U > n-k} }(-1)^{|U|}.
\end{equation}
(The sum runs over subsequences of $\overline{ K|I}$, parameterized by their positions in $\overline{ K|I}$; thus, equal subsequences at different positions lead to repeated addends.)
\end{theorem}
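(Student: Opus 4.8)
We begin by reducing everything to Solomon's Mackey formula. The plan is to first rewrite $\mathbf{V}_{n,k}=\mathbf{D}_{[k-1]}$ in the $\mathbf{B}$-basis. By the inverse relation in \eqref{equation.bidj}, $\mathbf{D}_{[k-1]}=\sum_{J\subseteq[k-1]}(-1)^{(k-1)-|J|}\mathbf{B}_J$, so multiplying on the right by $\mathbf{B}_I$ and applying Proposition \ref{prop.solmac} gives
\[
[\mathbf{B}_K]\mathbf{V}_{n,k}\mathbf{B}_I=\sum_{J\subseteq[k-1]}(-1)^{(k-1)-|J|}\,\bigl|\mathbb{N}^{I,J}_K\bigr|.
\]
I would then re-sum this over the matrices themselves rather than over $J$.

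Every matrix $M$ occurring in these $\mathbb{N}^{I,J}_K$ has row-sums vector $\comp(I)$ and reduced reading word $\comp(K)$, and is counted by the unique $J$ whose composition $\comp(J)$ equals the column-sums vector of $M$. Two bookkeeping observations convert the sum. If $M$ has $q$ columns then $|J|=q-1$, so the sign equals $(-1)^{k-q}$. Writing the column-sums vector as $(c_1,\dots,c_q)$, the subset $J$ consists of the partial sums $c_1,\,c_1+c_2,\dots,c_1+\cdots+c_{q-1}$, which are increasing with maximum $n-c_q$; hence $J\subseteq[k-1]$ if and only if $c_q\ge n-k+1$. Therefore
\[
[\mathbf{B}_K]\mathbf{V}_{n,k}\mathbf{B}_I=(-1)^{k}\sum_{M}(-1)^{q(M)},
\]
the sum being over all nonnegative integer matrices with row-sums vector $\comp(I)$, reduced reading word $\comp(K)$, and last column sum at least $n-k+1$, where $q(M)$ is the number of columns.

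Next I would exploit the structure recorded after Proposition \ref{prop.solmac}: the rightmost nonzero entry of row $i$ is the $i$-th entry $r_i$ of $\overline{K|I}=(r_1,\dots,r_\ell)$ with $\ell=|I|+1$. Any entry in the last column is automatically the rightmost entry of its row, so the last column of $M$ is exactly $\{r_i:i\in S\}$, where $S$ is the set of rows whose rightmost entry lies in the last column; its sum is $\Sigma U$ for the subsequence $U=(r_i)_{i\in S}$ of $\overline{K|I}$, and the constraint becomes $\Sigma U>n-k$. The key lemma is that for any prescribed row entry-sequences with $N'$ entries in total, $\sum_{M'}(-1)^{q(M')}=(-1)^{N'}$, where $M'$ ranges over all matrices carrying these sequences (entries increasing left to right within each row, no empty column). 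I would prove it by noting that the number of placements into $j$ columns allowing empty ones is the degree-$N'$ polynomial $g(j)=\prod_i\binom{j}{m_i}$ (with $m_i$ the length of row $i$), that the number with exactly $q$ nonempty columns is the finite difference $\Delta^{q}g(0)=\sum_{i}(-1)^{q-i}\binom{q}{i}g(i)$, and hence, by Newton's forward-difference formula $g(x)=\sum_q\binom{x}{q}\Delta^qg(0)$ evaluated at $x=-1$ (where $\binom{-1}{q}=(-1)^q$),
\[
\sum_{q}(-1)^{q}\Delta^{q}g(0)=g(-1)=\prod_i\binom{-1}{m_i}=(-1)^{N'}.
\]

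To finish, I would group the matrices $M$ by $S$ (equivalently by $U$). Deleting the last column is a bijection onto matrices with $q-1$ columns on the truncated row-sequences, which carry $N-|U|$ entries where $N=|K|+1$ is the total number of nonzero entries; the lemma then yields $\sum_{M:\,S}(-1)^{q(M)}=(-1)\cdot(-1)^{N-|U|}=(-1)^{N-|U|+1}$. Summing over the admissible $U$ and restoring the factor $(-1)^{k}$,
\[
[\mathbf{B}_K]\mathbf{V}_{n,k}\mathbf{B}_I=(-1)^{k+N+1}\sum_{\substack{U\subseteq\overline{K|I};\\ \Sigma U>n-k}}(-1)^{|U|}=(-1)^{k+|K|}\sum_{\substack{U\subseteq\overline{K|I};\\ \Sigma U>n-k}}(-1)^{|U|},
\]
since $(-1)^{N+1}=(-1)^{|K|}$. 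The main obstacle is the key lemma: pinning down the correct signed count of matrices by number of columns and proving it cleanly (the evaluation $g(-1)$ is exactly what collapses the many multinomial contributions to a single sign), together with checking that deletion of the last column is a sign-consistent bijection onto the truncated problem, including the degenerate cases where a truncated row becomes empty.
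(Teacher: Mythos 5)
Your proof is correct, and its first half is exactly the paper's: you rewrite $\mathbf{D}_{[k-1]}=\sum_{J\subseteq[k-1]}(-1)^{(k-1)-|J|}\mathbf{B}_J$, apply Solomon's Mackey formula, and re-sum over nonnegative integer matrices with row sums $\comp(I)$, reduced reading word $\comp(K)$, no empty columns, and last column sum exceeding $n-k$, each weighted by $(-1)^{k-\col}$; this is the paper's equation \eqref{equation.sum_A}, including your observation that $J\subseteq[k-1]$ translates into the last-column-sum condition. Where you genuinely diverge is in evaluating that signed sum. The paper builds a sign-reversing involution (split a non-final column with two nonzero entries, or merge a suitable adjacent pair), which cancels all matrices except explicit ``survivor'' matrices; these are then parametrized by the subsequences $U$ of $\overline{K|I}$ via $\col(A)=2+|K|-|U|$. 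You instead fix the last column (equivalently $U$), strip it off by a deletion/appending bijection, and evaluate the remaining signed count of column arrangements by the binomial-transform identity $\sum_q(-1)^q h(q)=g(-1)=\prod_i\binom{-1}{m_i}=(-1)^{N'}$ with $g(j)=\prod_i\binom{j}{m_i}$. Both mechanisms are sound: yours replaces the case analysis needed to check the involution is well defined (leftmost splittable-or-mergeable column, last column untouched) with a clean polynomial evaluation at $x=-1$, at the cost of introducing the finite-difference lemma; the paper's involution additionally produces an explicit description of the surviving matrices, though that description is not used elsewhere. Your bookkeeping --- the sign $(-1)^{k-q}$, the bijection with the truncated row sequences including the degenerate empty-row cases, and the final simplification $(-1)^{k+N+1}=(-1)^{k+|K|}$ with $N=|K|+1$ --- all checks out.
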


\begin{proof}
We define
\[
\mathbb{N}^{I}_K := \bigcup_{J \subseteq [n-1]}\mathbb{N}^{I, J}_K;
\]
this is the set of all nonnegative integer matrices with row sums vector $\comp(I)$ and reduced reading word $\comp(K)$ that have no zero columns. Given $A \in \mathbb{N}^{I}_K$, we let $\col(A)$ be the number of columns of $A$ and $A^\star$ the sequence of entries in the rightmost column of $A$. We have
$
\mathbf{V}_{n,k} = \mathbf{D}_{\left[  k-1\right]  }
= \sum\limits_{J\subseteq\left[
k-1\right]  }\left(  -1\right)  ^{\left(  k-1\right)  -\left\vert J\right\vert}\mathbf{B}_{J}
$
by Equation \eqref{equation.bidj}.
Hence,
\begin{align}
\nonumber [\mathbf{B}_K]\mathbf{V}_{n,k}\mathbf{B}_{I}  & =\sum_{J\subseteq\left[  k-1\right]
}\left(  -1\right)  ^{\left(  k-1\right)  -\left\vert J\right\vert }%
[\mathbf{B}_K]\mathbf{B}_{J}\mathbf{B}_{I}\\
\nonumber & = \sum_{\substack{J\subseteq\left[  k-1\right]; \\ A\in
\mathbb{N}^{I, J}_K }}\left(
-1\right)  ^{  \left(  k-1\right)  -\left\vert J\right\vert}\ \ \ \ \ \ \ \ \ \ \left(
\text{by Proposition \ref{prop.solmac}}\right)  \\
\label{equation.sum_A}
& = \sum_{\substack{A\in \mathbb{N}^{I}_K; \\ \Sigma A^\star >n-k}}
\left(  -1\right)  ^{ k  - \col(A)}
\end{align}
(since $\col(A) = |J| + 1$ and $\sum A^\star = n - \max J$ for any $A \in \NN^{I,J}_K$).

Now, we shall establish a sign-reversing involution on the set $\NN^{I}_K$ that makes most addends on the right hand side of \eqref{equation.sum_A} cancel.
Namely, call a column of a matrix $A \in \NN^{I}_K$ \emph{splittable}
if it is not the last column and has at least two nonzero entries.
A splittable column $c$ can be split into two by creating a new column
$c'$ immediately to its left and moving the top nonzero entry of $c$
to $c'$.
(All other cells of $c'$ are filled with $0$, and so is the cell of
$c$ from which an entry was moved.)
Conversely, two adjacent columns $c'$ and $c$ of $A$
(with $c'$ left of $c$) will be called \emph{mergeable} if
$c$ is not the last column of $A$ and the column $c'$ has
only one nonzero entry and this entry lies further up than all nonzero
entries of $c$. A mergeable pair of columns $c', c$ can be merged
into one simply by adding them together.
Splitting and merging are mutually inverse operations.
For example:
\begin{align*}
\begin{pmatrix}
1 & 2 & 0 & 1 \\ 0 & 1 & 3 & 2
\end{pmatrix}
\overset{\text{split column } 2}{\longrightarrow}
\begin{pmatrix}
1 & 2 & 0 & 0 & 1 \\ 0 & 0 & 1 & 3 & 2
\end{pmatrix}
\overset{\text{merge columns } 2, 3}{\longrightarrow}
\begin{pmatrix}
1 & 2 & 0 & 1 \\ 0 & 1 & 3 & 2
\end{pmatrix}
\end{align*}
(note that the last column is not splittable by definition).

Thus, all matrices $A \in \NN^{I}_K$ that have a splittable column
or a mergeable pair of columns cancel out from \eqref{equation.sum_A}
(just pick the leftmost splittable-or-mergeable column, and
merge or split it, as in \cite[proof of Theorem 2.2.2]{Sag20}).
What remains are the matrices $A$ whose
columns, all except for the last one, contain exactly one nonzero entry each,
and whose nonzero entries (apart from those in the last column)
are arranged from bottom to top as you move right in $A$. For example,
\[
A = \begin{pmatrix}
&  &  &  & p\\
&  &  & w & q\\
&  &  &  & r\\
& y & z &  & s\\
x &  &  &  & t
\end{pmatrix}
\]
is such a matrix (where empty cells mean zeroes, but the entries $q,s,t$ can also be zeroes). We call such matrices \emph{survivor matrices}. It is easy to see that such a survivor matrix $A \in \mathbb{N}^{I}_K$ is \textbf{entirely determined} by the knowledge of which of its rows end with a $0$.
(Indeed, this data determines which entries of $\comp(K)$ go into the last column of $A$ and which go into the other columns. The latter entries must then be arranged in distinct columns from left to right.)
If the $i$-th row of the matrix does not end with a zero, then it ends precisely with the $i$-th element of $\overline{K|I}$ (we remind the reader that $\overline{ K|I}$ is the sequence of the rightmost elements of each row of $A$). We can therefore encode this data as a nonempty subsequence $U$ of $\overline{K|I}$, which consists of those entries of $\overline{K|I}$ that are in the last column of $A$. Besides, there is exactly one column in $A$ for each element of $\comp(K)$ that is not in the last column and there are exactly $|\comp(K)| - |U| = |K|+1-|U|$ such elements. Add the last column to get:
\begin{equation}
\label{equation.colA}
\col(A) = 2 + |K| - |U|.
\end{equation}

\begin{example}
Let $\comp(K) = (1,2,3,1)$ and $\comp(I) = (3, 4)$ so that $\overline{ K|I} = (2,1)$. Then, the survivor matrices $A$ are
\[
A_1 = \begin{pmatrix}
0&0  &1  &2 \\
3&1  &0  & 0
\end{pmatrix},
\qquad
A_2 = \begin{pmatrix}
0&1  &2  &0 \\
3&0  &0  & 1
\end{pmatrix},
\qquad
A_3 = \begin{pmatrix}
0&1  &2 \\
3&0  & 1
\end{pmatrix}.
\]
Their respective corresponding subsequences $U$ are $U_1 = (2)$, $U_2 = (1)$ and $U_3 =  (2,1)$.
\end{example}

\noindent In Equation (\ref{equation.sum_A}) replace $\col(A)$ by the formula of Equation (\ref{equation.colA}) and reindex the summation over subsequences of $\overline{ K|I}$ to get Theorem \ref{thm.vnk}. 
\end{proof}

\subsection{Expansion of $\mathbf{V}_{n}^{(q)}\mathbf{B}_I$ in the $\mathbf{B}$-basis}

We use Theorem \ref{thm.vnk} to explicit the action of the $q$-deformed Dynkin operator on the $\mathbf{B}$-basis of the descent algebra $\Sigma_n$. 

\begin{theorem}
\label{thm.vnq}
Let $n$ be a positive integer and $q \in \mathbf{k}$ be an invertible parameter.
For $I, K \subseteq [n-1]$, the coefficient in $\mathbf{B}_K$ of $\mathbf{V}^{(q)}_n\mathbf{B}_I$ is $0$ if $I \nsubseteq K$ and otherwise given by
\begin{equation}
\label{equation.thm.vnq}
[\mathbf{B}_K]\mathbf{V}^{(q)}_n\mathbf{B}_I
= (-1)^{|K|} q^{n-1} (1-q^{-1})^{|I|} \prod_{v \in \overline{ K|I}}[v]_{q^{-1}}.
\end{equation}
Here, for any integer $v \in \NN$ and any $r \in \kk$, we define the $r$-integer $[v]_r$ to be $r^0 + r^1 + \cdots + r^{v-1}$. (Note that $[v]_{q^{-1}} = q^{-v+1} [v]_q$.)
\end{theorem}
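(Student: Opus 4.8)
The plan is to derive the closed-form expansion of $[\mathbf{B}_K]\mathbf{V}^{(q)}_n\mathbf{B}_I$ directly from the $\mathbf{D}$-basis decomposition \eqref{equation.vnqd} together with the formula of Theorem \ref{thm.vnk}. Since $\mathbf{V}^{(q)}_n = \sum_{k=1}^n (-q)^{k-1} \mathbf{V}_{n,k}$ by definition of $\mathbf{V}_{n,k}$ and Proposition \ref{proposition.vnqd}, linearity gives
\[
[\mathbf{B}_K]\mathbf{V}^{(q)}_n\mathbf{B}_I
= \sum_{k=1}^n (-q)^{k-1}\, [\mathbf{B}_K]\mathbf{V}_{n,k}\mathbf{B}_I .
\]
The case $I \nsubseteq K$ is immediate, since each $[\mathbf{B}_K]\mathbf{V}_{n,k}\mathbf{B}_I$ vanishes there (the matrix sets $\mathbb{N}^{I,J}_K$ are empty unless $I \subseteq K$). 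So I would assume $I \subseteq K$ and substitute the formula from Theorem \ref{thm.vnk}.

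**Interchanging the sums.** After substitution the expression becomes a double sum over $k \in [n]$ and over subsequences $U \subseteq \overline{K|I}$ with the constraint $\Sigma U > n-k$. The natural move is to swap the order of summation, making $U$ the outer index and $k$ the inner one. For a fixed $U$, the constraint $\Sigma U > n - k$ becomes $k > n - \Sigma U$, i.e. $k$ ranges over the integers from $n - \Sigma U + 1$ up to $n$. Collecting the sign $(-1)^{k+|K|}$ from Theorem \ref{thm.vnk} together with $(-q)^{k-1}$, the inner sum over $k$ is a finite geometric-type sum in $q$; summing it should produce, up to a sign and a power of $q$, a factor of the form $q^{\,\Sigma U}$ (or a difference of such terms), which is where the $q$-integers $[v]_{q^{-1}}$ will ultimately come from. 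I expect the dependence on $U$ to factor as $q^{\Sigma U}$ times $(-1)^{|U|}$, so that the outer sum over $U$ splits as a product over the entries $v \in \overline{K|I}$.

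**Factoring the sum over subsequences.** The key structural step is that a sum of the form $\sum_{U \subseteq \overline{K|I}} (-1)^{|U|} q^{\Sigma U}$, running over all $2^{|I|}$ subsequences irrespective of repeated entries, factors as $\prod_{v \in \overline{K|I}} (1 - q^{v})$ by the standard expansion of a product of binomials (each entry $v$ is either included in $U$, contributing $-q^v$, or excluded, contributing $1$). The main obstacle will be massaging the inner geometric sum in $k$ so that the resulting per-entry factor is precisely $(1 - q^{-1})\,[v]_{q^{-1}}$ after extracting the global $q^{n-1}$; here the identity $1 - q^{-v} = (1-q^{-1})[v]_{q^{-1}}$ and the parenthetical remark $[v]_{q^{-1}} = q^{-v+1}[v]_q$ will be the bridge. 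I would track the powers of $q$ carefully, since the geometric sum naturally yields powers like $q^{n-\Sigma U}$ while the target is organised around the global prefactor $q^{n-1}$; reconciling the bookkeeping of these exponents, and confirming that the invertibility of $q$ is exactly what is needed to pass to $q^{-1}$ in the geometric summation, is the delicate part. Once the inner sum is in the form $q^{n-1}(1 - q^{-\Sigma U})$-type contributions per subsequence, the factorization over $v \in \overline{K|I}$ completes the proof and matches \eqref{equation.thm.vnq}.
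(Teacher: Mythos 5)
Your proposal follows essentially the same route as the paper's proof: expand $\VVq_n$ via Proposition~\ref{proposition.vnqd}, apply Theorem~\ref{thm.vnk}, swap the sums over $k$ and $U$, evaluate the inner geometric sum, and factor the resulting alternating sum over subsequences as a product over the entries of $\overline{K|I}$. The one detail to make explicit in a write-up is that the constant term arising from the $1$ in $q^{n-1}(1-q^{-\Sigma U})/(1-q^{-1})$ vanishes because $\sum_{U \subseteq \overline{K|I}}(-1)^{|U|} = 0$ (the sequence $\overline{K|I}$ has $|I|+1 \geq 1$ entries), after which $\sum_{U}(-1)^{|U|}q^{-\Sigma U} = \prod_{v \in \overline{K|I}}(1-q^{-v})$ gives exactly \eqref{equation.thm.vnq}.
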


\begin{proof}
We WLOG assume that $I \subseteq K$, since otherwise both sides are $0$ by the second sentence after Proposition~\ref{prop.solmac}.
Now, using \eqref{equation.vnqd} and Theorem \ref{thm.vnk}, we get
\begin{align*}
[\mathbf{B}_K]\mathbf{V}^{(q)}_n\mathbf{B}_I &= \sum_{k} (-q)^{k-1} [\mathbf{B}_K]\mathbf{V}_{n,k}\mathbf{B}_I
= \sum_{k} (-1)^{|K|+1}q^{k-1}\sum_{\substack{U \subseteq \overline{ K|I};\\ \Sigma U > n-k} }(-1)^{|U|}.
\end{align*}
Switching the summation order between $k$ and $U$, we rewrite this as
\begin{align*}
[\mathbf{B}_K]\mathbf{V}^{(q)}_n\mathbf{B}_I &=(-1)^{|K|+1} \sum_{U \subseteq \overline{ K|I}}(-1)^{|U|} \sum_{k \geq n - \Sigma U +1} q^{k-1}\\
&=(-1)^{|K|+1} \sum_{U \subseteq \overline{ K|I}}(-1)^{|U|} \sum_{k'=0}^{\Sigma U -1}q^{n-k'-1} \ \ \ \ \left(
 \text{reindex with }k'=n-k \right)  \\
&=(-1)^{|K|+1}q^{n-1} \sum_{U \subseteq \overline{ K|I}}(-1)^{|U|} \frac{1 - q^{-\Sigma U}}{1-q^{-1}}\\
&=(-1)^{|K|}\frac{q^{n-1}}{1-q^{-1}} \sum_{U \subseteq \overline{ K|I}}(-1)^{|U|}q^{-\Sigma U} ,
\end{align*}
where the last line uses the fact that $
\sum_{U \subseteq \overline{ K|I}}(-1)^{|U|}=0$ (since the sequence $\overline{ K|I}$ is nonempty). To end the proof, note that for any sequence $S$ and any parameter $x$,
\[
\prod_{s \in S}(1-x^s) = \sum_{T \subseteq S}(-1)^{|T|}x^{\Sigma T},
\]
where the summation is over all subsequences $T$ of $S$. As a consequence,
\begin{align*}
[\mathbf{B}_K]\mathbf{V}^{(q)}_n\mathbf{B}_I &=(-1)^{|K|}\frac{q^{n-1}}{1-q^{-1}} \prod_{v \in \overline{ K|I}}(1-q^{-v})
=(-1)^{|K|}q^{n-1}(1-q^{-1})^{|\overline{ K|I}|-1} \prod_{v \in \overline{ K|I}}{[v]}_{q^{-1}}.
\end{align*}
Finally, notice that $|\overline{ K|I}| = |I|+1$ to recover Theorem \ref{thm.vnq}.
\end{proof}

Theorem~\ref{thm.vnq} can also be proved via noncommutative symmetric functions, using \cite[Proposition 5.30, Theorem 4.17, Definition 5.1, Proposition 5.2]{NCSF2}.
Indeed, the image of $\VVq_n$ under the standard identification $\Sigma_n \to \operatorname{NSym}_n$ is the element $\Theta_n(q)$ from \cite[(66)]{NCSF2}, and thus differs by a $1-q$ factor from the $S_n((1-q)A)$ in \cite[Proposition 5.2]{NCSF2}.

\section{Consequences of Theorem \ref{thm.vnq}}
\subsection{Idempotence}
Theorem \ref{thm.vnq} has various implications regarding the Dynkin operator and some of its generalisations. First, we get a new elementary proof that $\frac{1}{n}\mathbf{V}_n$ is an idempotent of $\mathbf{k}S_n$. 
\begin{corollary}
Equations \eqref{equation.vnbi}, \eqref{equation.vndi} and \eqref{equation.vn2vn} are direct consequences of Theorem \ref{thm.vnq}.
\end{corollary}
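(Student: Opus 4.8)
The plan is to derive each of the three equations \eqref{equation.vnbi}, \eqref{equation.vndi} and \eqref{equation.vn2vn} from the explicit formula \eqref{equation.thm.vnq} by specializing to $q=1$ and reading off the coefficients. The natural order is to prove \eqref{equation.vnbi} first, since it is the most immediate consequence; then obtain \eqref{equation.vndi} from it via the change of basis \eqref{equation.bidj}; and finally deduce \eqref{equation.vn2vn} as a corollary of \eqref{equation.vndi}.

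\medskip

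\noindent\textbf{Proof of \eqref{equation.vnbi}.} Fix a nonempty $I \subseteq [n-1]$. I want to show $[\mathbf{B}_K]\mathbf{V}_n\mathbf{B}_I = 0$ for every $K \subseteq [n-1]$. If $I \nsubseteq K$, the coefficient is $0$ by the remark after Proposition~\ref{prop.solmac}. Otherwise, I would set $q=1$ in \eqref{equation.thm.vnq}. The key point is that the factor $(1-q^{-1})^{|I|}$ vanishes at $q=1$ whenever $|I| \geq 1$, i.e.\ whenever $I$ is nonempty, while the remaining factors $q^{n-1}\prod_{v}[v]_{q^{-1}}$ stay finite at $q=1$. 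Since $\mathbf{V}_n = \mathbf{V}_n^{(1)}$ and the expression is a polynomial in $q$ (so evaluation at $q=1$ is legitimate even though Theorem~\ref{thm.vnq} nominally assumes $q$ invertible), this forces $[\mathbf{B}_K]\mathbf{V}_n\mathbf{B}_I = 0$. As this holds for all $K$, we conclude $\mathbf{V}_n\mathbf{B}_I = 0$.

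\medskip

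\noindent\textbf{Proof of \eqref{equation.vndi}.} For any $I \subseteq [n-1]$, expand $\mathbf{D}_I$ in the $\mathbf{B}$-basis using the second formula in \eqref{equation.bidj}: $\mathbf{D}_I = \sum_{J \subseteq I}(-1)^{|I\setminus J|}\mathbf{B}_J$. Multiplying on the left by $\mathbf{V}_n$ and applying \eqref{equation.vnbi}, every term with $J \neq \emptyset$ dies, leaving only the $J = \emptyset$ term, namely $(-1)^{|I|}\mathbf{V}_n\mathbf{B}_\emptyset$. Since $\mathbf{B}_\emptyset = \mathbf{D}_\emptyset$ is the identity of $\Sigma_n$ (the sum over permutations with empty descent set, which is just $\id$), this equals $(-1)^{|I|}\mathbf{V}_n$, establishing \eqref{equation.vndi}.

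\medskip

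\noindent\textbf{Proof of \eqref{equation.vn2vn}.} Recall from \eqref{equation.vnqd} at $q=1$ that $\mathbf{V}_n = \sum_{k=1}^{n}(-1)^{k-1}\mathbf{D}_{[k-1]}$. Then
\begin{equation*}
\mathbf{V}_n^2 = \mathbf{V}_n\sum_{k=1}^{n}(-1)^{k-1}\mathbf{D}_{[k-1]}
= \sum_{k=1}^{n}(-1)^{k-1}(-1)^{|[k-1]|}\mathbf{V}_n
= \sum_{k=1}^{n}(-1)^{k-1}(-1)^{k-1}\mathbf{V}_n,
\end{equation*}
where the middle step uses \eqref{equation.vndi} together with $|[k-1]| = k-1$. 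Each summand contributes $\mathbf{V}_n$, and there are $n$ of them, so $\mathbf{V}_n^2 = n\mathbf{V}_n$.

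\medskip

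\noindent I expect the only subtle point to be the passage to $q=1$ in the first step: Theorem~\ref{thm.vnq} is stated for invertible $q$, yet I want to evaluate at $q=1$. The cleanest way around this is to observe that both sides of \eqref{equation.thm.vnq}, after clearing the $q^{-1}$'s, are genuine polynomial (Laurent) identities in $q$ valid on the dense set of invertible parameters, and that the left-hand coefficient $[\mathbf{B}_K]\mathbf{V}_n^{(q)}\mathbf{B}_I$ is itself polynomial in $q$ by \eqref{equation.vnq}; hence the vanishing at $q=1$ can be read off directly. All remaining steps are formal manipulations in $\Sigma_n$ and should present no difficulty.
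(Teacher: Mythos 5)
Your proposal is correct and follows essentially the same route as the paper: set $q=1$ in \eqref{equation.thm.vnq} so that the factor $(1-q^{-1})^{|I|}$ kills all coefficients for nonempty $I$, then deduce \eqref{equation.vndi} via \eqref{equation.bidj}, and finally \eqref{equation.vn2vn} via \eqref{equation.vnqd}. Your extra remark on justifying the specialization at $q=1$ (where the intermediate division by $1-q^{-1}$ in the proof of Theorem~\ref{thm.vnq} degenerates) is a careful touch the paper leaves implicit, but it does not change the argument.
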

\begin{proof}
Setting $q=1$ in Equation \eqref{equation.thm.vnq}, we
\begin{vershort}
easily obtain
\end{vershort}
\begin{verlong}
obtain
\begin{equation*}
\mathbf{V}_n\mathbf{B}_I
= \begin{cases} \mathbf{V}_n, & \text{ if } I=\emptyset \\ 0 & \text{ otherwise}\end{cases}
\qquad \text{ for all } I \subseteq [n-1].
\end{equation*}
This yields
\end{verlong}
\eqref{equation.vnbi}.

Combining \eqref{equation.bidj} with \eqref{equation.vnbi}, we obtain
\begin{align*}
\VV_n \DD_I = \sum_{J \subseteq I}(-1)^{|I \setminus J|}\VV_n \BB_J
= (-1)^{|I \setminus \varnothing|}\VV_n \BB_\varnothing
= (-1)^{|I|}\VV_n,
\end{align*}
since $\BB_\varnothing = 1$.
Thus, \eqref{equation.vndi} is proved. Now, equations \eqref{equation.vnqd} (for $q=1$) and \eqref{equation.vndi} yield
\begin{align*}
\mathbf{V}_n\mathbf{V}_n &= \sum_{k=1}^n (-1)^{k-1} \VV_n \mathbf{D}_{[k-1]}
= \sum_{k=1}^n (-1)^{k-1} (-1)^{k-1} \VV_n
= \sum_{k=1}^n \VV_n
= n \VV_n .
\end{align*}
In other words, $\mathbf{V}_n^2 = n\mathbf{V}_n$, which proves \eqref{equation.vn2vn}.
\end{proof}
%
\subsection{Zero coefficients and eigenvalues}
\noindent Theorem \ref{thm.vnq} determines which of the $[\mathbf{B}_K]\mathbf{V}^{(q)}_n\mathbf{B}_I$ are zero for $I, K \subseteq [n-1]$.
\begin{corollary}
\label{corollary.zerocoeffs}
Let $\mathbf{k}$ be a field of characteristic $0$.
Fix $n \in \NN$ and $q \in \mathbf{k}$ and $I, K \subseteq [n-1]$.
If $q$ is a primitive $p$-th root of unity for $p>1$, then we have $[\mathbf{B}_K] \mathbf{V}^{(q)}_n\mathbf{B}_I = 0$ if and only if $I \nsubseteq K$ or $\overline{K|I}$ contains a multiple of $p$.
If $q = 1$, then $[\mathbf{B}_K] \mathbf{V}^{(q)}_n\mathbf{B}_I = 0$ if and only if $I \neq \varnothing$.
If $q = 0$, then $[\mathbf{B}_K] \mathbf{V}^{(q)}_n\mathbf{B}_I = 0$ if and only if $I \neq K$.
If $q$ is nonzero and not a root of unity, then $[\mathbf{B}_K] \mathbf{V}^{(q)}_n\mathbf{B}_I = 0$ if and only if $I \nsubseteq K$.
\end{corollary}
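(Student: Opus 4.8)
The plan is to read off everything from the product formula in Theorem~\ref{thm.vnq}, after first disposing of the non-invertible case $q=0$ by hand. Since Theorem~\ref{thm.vnq} requires $q$ invertible, I would begin by recalling that $\mathbf{V}^{(0)}_n = \id$ (as noted after the definition of $\mathbf{V}^{(q)}_n$), whence $\mathbf{V}^{(0)}_n\mathbf{B}_I = \mathbf{B}_I$ and therefore $[\mathbf{B}_K]\mathbf{V}^{(0)}_n\mathbf{B}_I$ is the Kronecker delta $\delta_{I,K}$, because the $\mathbf{B}_K$ form a basis of $\Sigma_n$. This vanishes exactly when $I \neq K$, which settles the $q=0$ clause.

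For the remaining three clauses $q$ is nonzero, hence invertible, so Theorem~\ref{thm.vnq} applies. If $I \nsubseteq K$ the coefficient is $0$ in every case, so I would assume $I \subseteq K$ and examine
\[
[\mathbf{B}_K]\mathbf{V}^{(q)}_n\mathbf{B}_I
= (-1)^{|K|} q^{n-1} (1-q^{-1})^{|I|} \prod_{v \in \overline{ K|I}}[v]_{q^{-1}}.
\]
Since $\mathbf{k}$ is a field, a product vanishes iff one factor does, and here $(-1)^{|K|}$ and $q^{n-1}$ are manifestly nonzero. Thus the coefficient vanishes iff $(1-q^{-1})^{|I|} = 0$ or $[v]_{q^{-1}} = 0$ for some $v \in \overline{K|I}$. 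These two factors I would analyze using $1-q^{-1} = 0 \iff q = 1$ and, for $q \neq 1$, the identity $[v]_{q^{-1}} = \frac{1-q^{-v}}{1-q^{-1}}$, which gives $[v]_{q^{-1}} = 0 \iff q^{v} = 1$; here the hypothesis $\operatorname{char}\mathbf{k} = 0$ is precisely what guarantees that when $q = 1$ one instead has $[v]_{q^{-1}} = [v]_1 = v \neq 0$ for every part $v \geq 1$.

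From these observations the three clauses follow by a short case analysis. If $q$ is a primitive $p$-th root of unity with $p > 1$, then $q \neq 1$ forces $1 - q^{-1} \neq 0$, while $q^v = 1 \iff p \mid v$, so some factor $[v]_{q^{-1}}$ vanishes iff $\overline{K|I}$ contains a multiple of $p$; combining with the $I \nsubseteq K$ case gives the stated criterion. If $q$ is nonzero and not a root of unity, then $1 - q^{-1} \neq 0$ and $q^v \neq 1$ for every $v \geq 1$, so all factors are nonzero and the coefficient vanishes only when $I \nsubseteq K$. Finally, for $q = 1$ one has $1 - q^{-1} = 0$, so $(1-q^{-1})^{|I|}$ vanishes precisely when $|I| > 0$, i.e.\ $I \neq \varnothing$, while every $[v]_1 = v$ is nonzero; since $\varnothing \subseteq K$ always, the $I \nsubseteq K$ case is already subsumed under $I \neq \varnothing$, so the criterion collapses to $I \neq \varnothing$. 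The only real care needed throughout is this bookkeeping of how the $I \nsubseteq K$ condition merges with the vanishing of the $q$-integer factors, together with the systematic use of $\operatorname{char}\mathbf{k} = 0$ to keep $[v]_1 = v$ nonzero and to make the multiples-of-$p$ characterization exact; I do not expect any substantial obstacle beyond this.
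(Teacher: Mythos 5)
Your proof is correct and follows essentially the same route as the paper's: dispose of $q=0$ via $\mathbf{V}^{(0)}_n=\operatorname{id}$, then read off all remaining cases from the product formula of Theorem~\ref{thm.vnq} by determining when each factor vanishes. The only cosmetic difference is that you handle $q=1$ directly from the formula, whereas the paper cites \eqref{equation.vnbi} (itself obtained by setting $q=1$ in that same formula).
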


\begin{proof}
The case $q = 0$ follows from $\VV^{(0)}_n = \id$, whereas the case $q = 1$ follows from \eqref{equation.vnbi}.
The case $I \nsubseteq K$ is trivial by Theorem~\ref{thm.vnq}.
Now let $I \subseteq K$ and $q \neq 0, 1$.
According to \eqref{equation.thm.vnq}, we have $[\mathbf{B}_K]\mathbf{V}^{(q)}_n\mathbf{B}_I = 0$ iff some $v \in \overline{ K|I}$ satisfies $[v]_{q^{-1}}=0$.
But this happens precisely when
$q \neq 1$ and $q^v = 1$ for some $v \in \overline{ K|I}$.
\end{proof}
We can now compute the eigenvalues of the action of $\mathbf{V}_n^{(q)}$ on $\Sigma_n$ by left multiplication -- i.e., of the linear endomorphism $u \mapsto \mathbf{V}_n^{(q)} u$ of $\Sigma_n$ --, which we name the eigenvalues of $\mathbf{V}_n^{(q)}$ for brevity. As the matrix entries $[\mathbf{B}_K]\mathbf{V}^{(q)}_n\mathbf{B}_I$ are in triangular form (being zero unless $I\subseteq K$), the eigenvalues of $\mathbf{V}_n^{(q)}$ are exactly its diagonal elements.
\begin{corollary}
\label{corollary.eigen}
Fix $n>0$ and $q\neq 1$. The eigenvalues of $\mathbf{V}_n^{(q)}$ are the elements of the family $(e_I)_{I \subseteq [n-1]}$ defined by
\begin{equation}
 (1-q)e_I = \prod_{v \in \comp(I)} (1-q^v).
\end{equation}
Note that if $q$ is a $p$-th root of unity with $p>1$ and $\comp(I)$ contains a multiple of $p$, then $e_I = 0$. 
\end{corollary}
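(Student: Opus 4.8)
The plan is to lean on the triangularity already noted just before the statement. Ordering the $\mathbf{B}$-basis by any linear extension of the inclusion order on subsets of $[n-1]$, the matrix of the left-multiplication endomorphism $u \mapsto \mathbf{V}_n^{(q)}u$ is triangular, because $[\mathbf{B}_K]\mathbf{V}^{(q)}_n\mathbf{B}_I = 0$ unless $I \subseteq K$ by Theorem~\ref{thm.vnq}. Its eigenvalues are therefore exactly the diagonal entries $[\mathbf{B}_I]\mathbf{V}^{(q)}_n\mathbf{B}_I$, and the entire problem reduces to computing this one coefficient and checking that it equals $e_I$.

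To evaluate the diagonal entry I would first determine $\overline{I|I}$. With $K = I$ the refinement $\comp(I) \preceq \comp(I)$ is the trivial one, in which each part of $\comp(I)$ forms its own single-element subsequence; hence the list of rightmost elements is $\overline{I|I} = \comp(I)$. Feeding $K = I$ and $\overline{I|I} = \comp(I)$ into \eqref{equation.thm.vnq} gives
\[
[\mathbf{B}_I]\mathbf{V}^{(q)}_n\mathbf{B}_I = (-1)^{|I|} q^{n-1} (1-q^{-1})^{|I|} \prod_{v \in \comp(I)}[v]_{q^{-1}}.
\]

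The only step with any computation is the simplification of this expression, and it is routine rather than a real obstacle. I would write $[v]_{q^{-1}} = \frac{1-q^{-v}}{1-q^{-1}}$ and then clear the negative powers via $1 - q^{-v} = -q^{-v}(1-q^v)$ and $1 - q^{-1} = -q^{-1}(1-q)$, turning the product into one over the factors $1 - q^v$. The bookkeeping closes because $\comp(I)$ has $|I|+1$ parts summing to $n$: tracking the accumulated signs and powers of $q$, the prefactor $(-1)^{|I|}q^{n-1}(1-q^{-1})^{|I|}$ exactly absorbs the stray signs and $q$-powers, leaving $\frac{1}{1-q}\prod_{v \in \comp(I)}(1-q^v)$. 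Since $q \neq 1$ makes $1 - q$ invertible, this matches the defining relation $(1-q)e_I = \prod_{v \in \comp(I)}(1-q^v)$, so the diagonal entry is $e_I$.

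Finally, the concluding vanishing remark needs no separate argument: because $e_I$ is the diagonal coefficient $[\mathbf{B}_I]\mathbf{V}^{(q)}_n\mathbf{B}_I$, Corollary~\ref{corollary.zerocoeffs} applied with $K = I$ (again using $\overline{I|I} = \comp(I)$) shows that $e_I = 0$ exactly when some part of $\comp(I)$ is divisible by $p$, which yields the stated criterion. The only points worth double-checking are the well-definedness of $e_I$ (invertibility of $1-q$ over the field) and the sign/power-of-$q$ accounting in the simplification above.
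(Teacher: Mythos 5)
Your proposal is correct and takes essentially the same route as the paper: the paper's proof is literally ``Set $K=I$ in the formula of Theorem~\ref{thm.vnq}'', relying on the triangularity observation stated just before the corollary, and your identification $\overline{I|I}=\comp(I)$ together with the sign and $q$-power bookkeeping (using that $\comp(I)$ has $|I|+1$ parts summing to $n$) is exactly the intended computation. The only caveat, which your proof inherits from the paper's, is that Theorem~\ref{thm.vnq} assumes $q$ invertible, so the formula as derived needs $q \neq 0$ as well as $q \neq 1$.
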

\begin{proof}
Set $K=I$ in the formula of Theorem \ref{thm.vnq}.
\end{proof}

\subsection{Image space dimension}
We proceed with a corollary and some comments regarding the dimension of $\mathbf{V}_n^{(q)}\Sigma_n$. 
\begin{corollary}
\label{corollary.rank}
Let $n>0$. Let $\mathbf{k}$ be a field. The dimension of the subspace $\mathbf{V}_n^{(q)}\Sigma_n$ of the descent algebra $\Sigma_n$ is $2^{n-1}$ when $q$ is not a root of unity.
If $q$ is a primitive $p$-th root of unity with $p>1$,
\begin{equation}
\dim\left(\mathbf{V}_n^{(q)}\Sigma_n\right) = s_n^{(p)},
\end{equation}
where $s_n^{(p)}$ is the $n$-th Fibonacci number of order $p$ defined by $s_0^{(p)} = 0$ and $s_n^{(p)} = 2^{n-1}$ for all $1 \leq n < p$ and the recurrence relation
\begin{equation*}
s^{(p)}_n = 
s^{(p)}_{n-1} + s^{(p)}_{n-2} + \cdots + s^{(p)}_{n-p} 
\qquad \text{for all } n \geq p.
\end{equation*}
\end{corollary}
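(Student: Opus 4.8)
The plan is to compute $\dim\left(\mathbf{V}_n^{(q)}\Sigma_n\right)$ as the \emph{rank} of the matrix $M = \left([\mathbf{B}_K]\mathbf{V}^{(q)}_n\mathbf{B}_I\right)_{I,K \subseteq [n-1]}$ representing left multiplication by $\mathbf{V}^{(q)}_n$ in the $\mathbf{B}$-basis, since $\mathbf{V}_n^{(q)}\Sigma_n$ is the image of this endomorphism. As noted right before Corollary~\ref{corollary.eigen}, ordering subsets by inclusion-compatible refinement makes $M$ triangular (entries vanish unless $I \subseteq K$), so its rank equals the number of nonzero diagonal entries, i.e. the number of subsets $I \subseteq [n-1]$ with $e_I \neq 0$, where $e_I = \frac{1}{1-q}\prod_{v \in \comp(I)}(1-q^v)$ by Corollary~\ref{corollary.eigen}. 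When $q$ is not a root of unity, every factor $1-q^v$ is nonzero, so all $2^{n-1}$ diagonal entries are nonzero and the rank is $2^{n-1}$, giving the first claim.

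First I would reduce the primitive $p$-th root of unity case to a pure counting problem. By Corollary~\ref{corollary.eigen}, $e_I = 0$ precisely when $\comp(I)$ has a part divisible by $p$; equivalently $e_I \neq 0$ iff every part of $\comp(I)$ is \emph{not} a multiple of $p$. Hence
\begin{equation}
\label{eq.rank-count}
\dim\left(\mathbf{V}_n^{(q)}\Sigma_n\right)
= \#\left\{\alpha \vDash n \;:\; p \nmid \alpha_i \text{ for all } i\right\},
\end{equation}
using the bijection $I \mapsto \comp(I)$ between subsets of $[n-1]$ and compositions of $n$. Writing $s_n^{(p)}$ for the right-hand side of \eqref{eq.rank-count}, it remains to show this quantity satisfies the stated initial conditions and the order-$p$ Fibonacci recurrence.

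Next I would verify the recurrence by a standard first-part decomposition. For $n \geq 1$, classify each admissible composition (one whose parts avoid multiples of $p$) by its first part $\alpha_1 = j$, which may be any value in $\{1,2,\dots\} \setminus p\mathbb{Z}$ with $1 \le j \le n$; deleting it leaves an admissible composition of $n-j$. This gives $s_n^{(p)} = \sum_{j \ge 1,\; p \nmid j} s_{n-j}^{(p)}$ with the convention $s_0^{(p)} = 1$ (the empty composition) for the purpose of this recursion. For $n \ge p$ I would confirm that the forbidden values $j \in \{p, 2p, \dots\}$ are exactly the ones excluded, and that after accounting for the $j=p$ term the sum telescopes into $s_{n-1}^{(p)} + \cdots + s_{n-p}^{(p)}$; a clean way is to subtract the shifted relation for $s_{n-p}^{(p)}$ from that for $s_n^{(p)}$. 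I would also check the base cases: $s_0^{(p)}=0$ as stated (the corollary's convention differs from the counting convention above only at $n=0$, which is harmless since $n>0$), and $s_n^{(p)} = 2^{n-1}$ for $1 \le n < p$ because every part of a composition of such a small $n$ is automatically less than $p$ hence not a multiple of $p$, so all $2^{n-1}$ compositions are admissible.

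The main obstacle is bookkeeping rather than conceptual: I must match the counting recurrence $\sum_{p \nmid j} s_{n-j}^{(p)}$ to the clean order-$p$ form $s_{n-1}^{(p)} + \cdots + s_{n-p}^{(p)}$, and reconcile the two differing conventions at $n=0$. The subtraction-of-shifted-relations trick handles this cleanly: from $s_n^{(p)} = \sum_{1 \le j \le n,\; p\nmid j} s_{n-j}^{(p)}$ one also has $s_{n-p}^{(p)} = \sum_{1 \le j \le n-p,\; p\nmid j} s_{n-p-j}^{(p)} = \sum_{p+1 \le i \le n,\; p \nmid i} s_{n-i}^{(p)}$, and subtracting isolates exactly $\sum_{1 \le i \le p-1} s_{n-i}^{(p)}$, i.e. the desired $p$-term recurrence once the excluded term $i=p$ is correctly placed. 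Everything else is a routine verification that the rank formula from triangularity applies, which is already justified by the triangularity remark preceding Corollary~\ref{corollary.eigen}.
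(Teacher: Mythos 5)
Your argument has a genuine gap at its very first step: the rank of a triangular matrix is \emph{not} in general equal to the number of nonzero diagonal entries. For instance, $\begin{pmatrix} 0 & 1 \\ 0 & 0\end{pmatrix}$ is triangular with all diagonal entries zero, yet has rank $1$. Triangularity only gives you the \emph{lower} bound $\rank(M) \geq \#\{I : e_I \neq 0\}$, because the submatrix on the rows and columns indexed by those $I$ is triangular with invertible diagonal. (If the operator were known to be diagonalizable the two quantities would agree, but the paper explicitly lists diagonalizability of $\mathbf{V}_n^{(q)}$ as an open question, so you cannot invoke it.) Consequently your proof establishes the non-root-of-unity case (there the matrix is genuinely invertible) and the inequality $\dim\left(\mathbf{V}_n^{(q)}\Sigma_n\right) \geq |\Comp_n^{(p)}|$, but not the reverse inequality, which is the substantive half of the statement.

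The paper closes this gap by exhibiting, for each $K \subseteq [n-1]$ with $\comp(K) = (\alpha_1,\dots,\alpha_{|K|+1}) \notin \Comp_n^{(p)}$, an explicit linear relation satisfied by the $\mathbf{B}$-coordinates of every $f \in \mathbf{V}_n^{(q)}\Sigma_n$: letting $m$ be maximal with $p \mid \alpha_m$, one gets $[\BB_K]f = 0$ when $m = |K|+1$, and otherwise $[\BB_{K'}]f = -[\BB_K]f$ where $\comp(K')$ is obtained from $\comp(K)$ by merging $\alpha_m$ with $\alpha_{m+1}$; the key computation is that $[\alpha_m + \alpha_{m+1}]_{q^{-1}} = [\alpha_{m+1}]_{q^{-1}}$ when $q^{\alpha_m} = 1$. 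These $|\Comp_n \setminus \Comp_n^{(p)}|$ relations have distinct leading terms, hence are linearly independent, and the rank--nullity theorem then gives the upper bound. You would need to supply an argument of this kind (or some other proof that the image is no larger than the span of the ``surviving'' $\BB_K$'s) for your proof to be complete. Your counting of $|\Comp_n^{(p)}|$ via the first-part decomposition and the subtraction of shifted relations is fine and essentially equivalent to the paper's last-entry bijection, so that portion needs no change.
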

\begin{proof}
If $q$ is not a root of unity,
then all the eigenvalues $e_I$ of $\VVq_n$ are nonzero (see Corollary~\ref{corollary.eigen}),
and thus $\VVq_n$ is invertible, so that its image is the whole $\Sigma_n$. Next, assume that $q$ is a primitive $p$-th root of unity with $p>1$.
We let $\Comp_n^{(p)}$ denote the set of the compositions of $n$ that do not contain a multiple of $p$.
We have $|\Comp_n^{(p)}| = s_n^{(p)}$ easily\footnote{%
If $p >n$, then no composition of $n$ contains a multiple of $p$, and thus $|\Comp_n^{(p)}| = 2^{n-1}$.
When $n\geq p$, we split compositions in $\Comp_n^{(p)}$ according to their last entry $m$.
If $m <p$, we get a bijection with the compositions in $\Comp_{n-m}^{(p)}$ by removing the last entry.
If $m>p$, replace $m$ by $m-p$ to get a composition in $\Comp_{n-p}^{(p)}$.
This last construction is also bijective, and the desired recurrence relation follows.} and it remains to show that $\dim\left(\mathbf{V}_n^{(q)}\Sigma_n\right) = |\Comp_n^{(p)}|$.
To prove this, we denote
$
a_{IK} := [\mathbf{B}_K]\mathbf{V}^{(q)}_n\mathbf{B}_I  \text{ for all } I, K \subseteq [n-1] 
$
and argue in two steps.

\textit{Proof of $\dim\left(\mathbf{V}_n^{(q)}\Sigma_n\right) \geq |\Comp_n^{(p)}|$:}
Recall that the operator $u \mapsto \mathbf{V}_n^{(q)} u$ on $\Sigma_n$ is triangular (the $a_{IK}$ are zero unless $I\subseteq K$). As a result, the dimension of $\mathbf{V}_n^{(q)}\Sigma_n$  is at least as large as the number of non-zero entries on the diagonal, i.e. the number of non-zero $e_I=a_{II}=[\mathbf{B}_I]\mathbf{V}^{(q)}_n\mathbf{B}_I$ when $I$ ranges over all subsets of $[n-1]$.
Since $q$ is a primitive $p$-th root of unity, Corollary \ref{corollary.eigen} shows that we have $e_I =0$ iff $\comp(I)$ contains at least one multiple of $p$, that is, iff $\comp(I) \notin \Comp_n^{(p)}$.
As a result, $\dim\left(\mathbf{V}_n^{(q)}\Sigma_n\right) \geq |\Comp_n^{(p)}|$.

\textit{Proof of $\dim\left(\mathbf{V}_n^{(q)}\Sigma_n\right) \leq |\Comp_n^{(p)}|$:}
The next step is to show that $\dim\left(\mathbf{V}_n^{(q)}\Sigma_n\right) \leq |\Comp_n^{(p)}|$.
By the rank-nullity theorem, it will suffice to find $|\Comp_n \setminus \Comp_n^{(p)}|$ many linearly independent linear relations between the coefficients of each element of $\mathbf{V}_n^{(q)}\Sigma_n$.
Let $K \subseteq [n-1]$ be such that $\comp(K) = (\alpha_1, \alpha_2, \dots, \alpha_{|K|+1}) \in \Comp_n \setminus \Comp_n^{(p)}$.
Thus, some $\alpha_i$ is a multiple of $p$.
Let $m$ be maximal such that $\alpha_m$ is a multiple of $p$. 
\begin{itemize}
\item[(a)] If $m=|K|+1$ then
\begin{align}
\label{eq.linrel0q.1}
\forall f \in \mathbf{V}_n^{(q)}\Sigma_n, \ \ [\BB_K]f = 0.
\end{align}
Indeed, by linearity, it suffices to show that $a_{IK} = 0$ for each $I \subseteq [n-1]$. Either $I \nsubseteq K$ and $a_{IK} = 0$, or $I \subseteq K$ and (by construction) $\overline{K|I}$ contains $\alpha_{|K|+1}$ and thus $a_{IK}$ contains the factor $[\alpha_{|K|+1}]_{q^{-1}} = 0$ (see Theorem \ref{thm.vnq}). 
\item[(b)] Next assume $m < |K|+1$.
Then, define the set $K' \subseteq [n-1]$ such that 
\[
\comp(K') = (\alpha_1, \dots, \alpha_{m-1}, \alpha_m + \alpha_{m+1}, \alpha_{m+2},\dots, \alpha_{|K|+1}).
\]
As a result $K' \subset K$. The composition $\comp(K')$ has exactly one part less than $\comp(K)$.
We fix any $I \subseteq [n-1]$, and aim to show that $a_{IK'} = - a_{IK}$.
Indeed, if $I \nsubseteq K$ and $I \nsubseteq K'$, then $a_{IK}=a_{IK'}=0$. If $I \subseteq K$ but $I \nsubseteq K'$, then $\alpha_m \in \overline{K|I}$ and $a_{IK} = 0$ (having a factor of $[\alpha_m]_{q^{-1}}$). But $a_{IK'} =0$ as well, since $I \nsubseteq K'$.
Finally, if $I \subseteq K' \subseteq K$, then either $\alpha_{m+1} \notin \overline{K|I}$ and the only difference between $a_{IK}$ and $a_{IK'}$ is the coefficient $(-1)^{|K|} = - (-1)^{|K'|}$; or $\alpha_{m+1} \in \overline{K|I}$ and an additional difference between coefficients $a_{IK}$ and $a_{IK'}$ is that the factor $[\alpha_{m+1}]_{q^{-1}}$ in $a_{IK}$ is replaced by $[\alpha_{m}+\alpha_{m+1}]_{q^{-1}}$ in $a_{IK'}$. But as $q$ is a $p$-th root of unity and $\alpha_{m}$ is a multiple of $p$,
we have $[\alpha_{m}+\alpha_{m+1}]_{q^{-1}} = [\alpha_{m+1}]_{q^{-1}}$
and therefore $a_{IK'} = - a_{IK}$ for all $I$. By linearity, we conclude that
\begin{align}
\label{eq.linrel0q.2}
\forall f \in \mathbf{V}_n^{(q)}\Sigma_n, \ [\BB_{K'}]f = -[\BB_K]f.
\end{align}
\end{itemize}
Thus, for each $K \subseteq [n-1]$ with $\comp(K) \in \Comp_n \setminus \Comp_n^{(p)}$, we have found a linear relation -- either \eqref{eq.linrel0q.1} or \eqref{eq.linrel0q.2} -- that holds for the coefficients of all $f \in \mathbf{V}_n^{(q)}\Sigma_n$.
These relations are linearly independent due to distinct ``leading terms''.
Hence, $\dim\left(\mathbf{V}_n^{(q)}\Sigma_n\right) \leq |\Comp_n^{(p)}|$ is proved,
so that $\dim\left(\mathbf{V}_n^{(q)}\Sigma_n\right) = |\Comp_n^{(p)}|$ follows.
\end{proof}

Corollary \ref{corollary.rank} shows that the image $\mathbf{V}^{(q)}_n\Sigma_n$ of the left action of $\mathbf{V}^{(q)}_n$ on $\Sigma_n$ is a proper subspace of $\Sigma_n$ when $q$ is a root of unity of order $\leq n$.
This reveals several connections:

\textbf{1.} In the special case $p=2$ i.e. $q=-1$, the basis of the image of $\mathbf{V}^{+}_n:=\mathbf{V}^{(-1)}_n$ is indexed by \emph{odd compositions} of $n$ (compositions with only odd entries). This reminds one of the \emph{peak algebra} $P_n$ of order $n$, a left ideal of $\Sigma_n$ where permutations with the same \emph{peak set} have the same coefficient. Furthermore $\mathbf{V}^{+}_n$ is used in \cite{Sch05} to provide an analogue of the Dynkin operator in the peak algebra. Using Theorem \ref{thm.vnq}, we provide an explicit expansion of $\mathbf{V}^{+}_n\mathbf{B}_I$, namely:
\begin{equation}
\mathbf{V}^{+}_n\mathbf{B}_I = 2^{|I|}\sum_{\substack{K \supseteq I; \\ \overline{K|I} \cap 2\NN = \emptyset}}(-1)^{n-1-|K|}\mathbf{B}_K.
\end{equation}

\textbf{2.} When $q$ is a primitive $p$-th root of unity with $p>1$, the dimension of the image of $\mathbf{V}^{(q)}_n$ is equal to the dimension of the vector space of \emph{$(p-1)$-extended peaks quasisymmetric functions of degree $n$} that we introduced in \cite{GriVas23}.

\begin{remark}
In \cite{CalderbankHanlonSundaram94}, it is claimed that the action of $\mathbf{V}_n^{(q)}$ on $\Sigma_n$ is diagonalizable for all $q$.
However, this is false for $n = 7$ and $q = e^{\pi i/3}$ (there is a nontrivial Jordan block for eigenvalue $1$).
This counterexample was found by GPT-5.5 and independently verified using SageMath.

The alleged proof of diagonalizability in \cite{CalderbankHanlonSundaram94} relies on the semisimplicity claim in \cite[Corollary 2.3]{CalderbankHanlonSundaram94}, which is false in general.
The diagonalizability of $\mathbf{V}_n^{(q)}$ does hold for generic $q$, as was shown in \cite[Theorem 5.14]{NCSF2}.

Perhaps the $0$-eigenvalue of $\mathbf{V}_n^{(q)}$ might be semisimple (i.e., all Jordan blocks for eigenvalue $0$ are of size $1$)?
\end{remark}


\acknowledgements{We thank the referee for highly useful comments and references.}

\printbibliography

\end{document}